\newtheorem{theorem}{Theorem}[section]
\newtheorem*{theorem*}{Theorem}
\newtheorem{proposition}[theorem]{Proposition}
\newtheorem{lemma}[theorem]{Lemma}
\newtheorem{conjecture}[theorem]{Conjecture}
\newtheorem{corollary}[theorem]{Corollary}
\theoremstyle{definition}
\theoremstyle{remark}
\newtheorem{remark}[theorem]{Remark}
\numberwithin{equation}{section}
\DeclareMathOperator{\vol}{vol}
\DeclareMathOperator{\area}{area}
\newcommand{\const}{{\rm const}}
\renewcommand{\epsilon}{\varepsilon}
\renewcommand{\phi}{\varphi}
\renewcommand{\kappa}{\varkappa}
\begin{document}

\title{Mahler's conjecture for some hyperplane sections}

\author{Roman Karasev}

\address{Roman Karasev, Dept. of Mathematics, Moscow Institute of Physics and Technology, Institutskiy per. 9, Dolgoprudny, Russia 141700}
\address{Roman Karasev, Institute for Information Transmission Problems RAS, Bolshoy Karetny per. 19, Moscow, Russia 127994}

\email{r\_n\_karasev@mail.ru}
\urladdr{http://www.rkarasev.ru/en/}

\thanks{Supported by the Federal professorship program grant 1.456.2016/1.4, by the Russian Foundation for Basic Research grants 18-01-00036 and 19-01-00169}

\keywords{Mahler's conjecture, symplectic volume, symplectic reduction, closed characteristics}
\subjclass[2010]{52B60, 37J45, 53D20}

\begin{abstract}
We use symplectic techniques to obtain partial results on Mahler's conjecture about the product of the volume of a convex body and the volume of its polar. We confirm the conjecture for hyperplane sections or projections of $\ell_p$-balls or the Hanner polytopes.
\end{abstract}

\maketitle

\section{Introduction}

In 1939 Mahler conjectured~\cite{ma1939} that for every centrally symmetric convex body $K\subset\mathbb R^n$ and its polar $K^\circ$ the inequality 
\[
\vol K\cdot \vol K^\circ \ge \frac{4^n}{n!}
\]
holds. Mahler has established the conjecture for $n=2$ himself, the case $n=3$ was done in the recent paper \cite{iriyeh2017}, whose short and clear exposition is \cite{fhmrz2019}.

The best result in arbitrary dimension is with $\frac{\pi^n}{n!}$ on the right hand side in \cite{ku2008}. Mahler also had a conjecture for bodies $K$ that are not necessarily centrally symmetric, but we limit ourselves to the symmetric case here, because the symplectic approach we use seems to have nothing to say about the non-symmetric Mahler conjecture, see \cite[Remark after Theorem 4.1]{abksh2014}.

It has been long known (as the Blaschke--Santal\'o inequality) that the maximum of the volume product $\vol K\cdot \vol K^\circ$ among centrally symmetric bodies is attained at ellipsoids, linear images of the unit ball, see \cite[Chapter 9]{gruber2007}. An equality case of Mahler's conjecture, where the volume product presumably attains its minimum, is, for example, when $K$ is a cube or its polar, the cross-polytope; this is easy to check by direct calculation. There also exist other conjectural minimizers, the Hanner polytopes, which by definition are the centrally symmetric polytopes that can be obtained from segments by repeatedly applying one of the two following operations: taking the Cartesian product, or taking the $\ell_1$-sum, the convex hull of the union of two polytopes in orthogonal linear subspaces. The cube and the cross-polytope are particular Hanner polytopes, and in \cite{nazarov2010,kim2014} it was shown that Mahler's conjecture holds for all convex bodies sufficiently close to a given Hanner polytope, in other words, the Hanner polytopes are indeed local minima of the volume product.

The described above results were obtained using different tools of convex and discrete geometry. In~\cite{aaok2013} it was proposed to use the symplectic point of view on this problem, in particular it was shown that Mahler's conjecture reduces to Viterbo's conjecture~\cite{vit2000} in symplectic geometry, whose statement is
\[
\vol S \ge \frac{c_{EHZ}(S)^n}{n!}
\]
for any convex $S\subset\mathbb R^{2n}$. Here $c_{EHZ}$ is the Ekeland--Hofer--Zehnder capacity, a somewhat mysterious symplectic invariant, which has an interpretation in terms of the smallest action closed characteristics of the hypersurface $\partial S$. We recommend the textbook \cite{hofer1994} as further reading about symplectic capacities, although we give relevant definitions wherever we use the symplectic notions in this paper.

In the particular case of Mahler's conjecture, $S$ is a \emph{Lagrangian product} (the product of a convex body in $\mathbb R^n$ and another convex body in its dual $\mathbb R^n$), $K\times K^\circ$, and the Ekeland--Hofer--Zehnder capacity of $S$ is the shortest length of a closed billiard trajectory in $K$ with length measured in the norm with unit ball $K$, these facts were established in \cite{aao2012}. In accordance with Mahler's conjecture, it was shown in~\cite{aaok2013} that $c_{EHZ}(K\times K^\circ) = 4$ for any convex and centrally symmetric $K\subset\mathbb R^n$.

In this paper we do not use Viterbo's conjecture, but we utilize somewhat simpler symplectic arguments to establish certain particular cases of Mahler's conjecture. Viterbo's conjecture was formulated by assuming that the optimal body is the standard ball or its convex image under a symplectomorphism. This is a much better (conjectural) description of the set of optimal bodies than what we have in Mahler's conjecture. Although not using Viterbo's conjecture, we show in this paper that the usage of symplectic balls indeed helps to prove something.

The results of this paper can be summarized as (the union of Theorems \ref{theorem:mahler-lph} and \ref{theorem:mahler-hh} below):

\begin{theorem*}
Mahler's conjecture holds for hyperplane sections of $\ell_p$-balls $(1\le p\le +\infty)$ and Hanner polytopes.
\end{theorem*}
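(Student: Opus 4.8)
The plan is to reinterpret Mahler's conjecture for a central hyperplane section $L=K\cap H$ as a Viterbo-type volume bound for the Lagrangian product $L\times L^\circ\subset\mathbb R^{2n}$, and then to establish that bound by exhibiting inside $L\times L^\circ$ a symplectically embedded round ball of the largest admissible capacity. Since $L$ is centrally symmetric and convex, the facts quoted in the introduction give $c_{EHZ}(L\times L^\circ)=4$, and the desired inequality $\vol L\cdot\vol L^\circ\ge 4^n/n!$ is precisely the statement $\vol(L\times L^\circ)\ge c_{EHZ}(L\times L^\circ)^n/n!$. The leverage I want to use is that a round ball $B^{2n}(r)$ turns this into an equality: it has capacity $\pi r^2$ and volume $(\pi r^2)^n/n!$. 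Consequently, a single symplectic embedding of the ball of capacity $4$ into $L\times L^\circ$ forces $\vol(L\times L^\circ)\ge 4^n/n!$, because symplectic maps preserve volume; and $4$ is the largest capacity for which such an embedding can exist, since $c_{EHZ}(L\times L^\circ)=4$ obstructs anything larger. Thus the whole task reduces to constructing a \emph{maximal} symplectic ball inside $L\times L^\circ$.

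To organize this construction I would pass one dimension up and use symplectic reduction. Writing $\mathbb R^{2(n+1)}=\mathbb R^{2n}\times\mathbb R^2$ with the last factor carrying the conjugate pair $(q_{n+1},p_{n+1})$ and taking $H=\{q_{n+1}=0\}$, the hyperplane $\Sigma=\{q_{n+1}=0\}\subset\mathbb R^{2(n+1)}$ is coisotropic, its characteristic leaves are the lines in the $p_{n+1}$-direction, and its symplectic reduction is $\mathbb R^{2n}$. Using the elementary polar duality $(K\cap H)^\circ=\pi(K^\circ)$ between a central section of $K$ and the orthogonal projection $\pi$ of its polar, one checks that reduction along $\Sigma$ carries the ambient Lagrangian product $K\times K^\circ$ exactly onto $L\times L^\circ$. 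Moreover the reduction of a round ball $B^{2(n+1)}(r)$ centred on $\Sigma$ is the ball $B^{2n}(r)$ of the same radius, hence of the same capacity. So a symplectic ball of capacity $4$ inside $K\times K^\circ$ that is adapted to the coisotropic data of $\Sigma$ descends to a symplectic ball of capacity $4$ inside $L\times L^\circ$, which is exactly what the first paragraph requires.

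The remaining and decisive task, where the special nature of the bodies enters, is to produce such a maximal ball in the ambient product $K\times K^\circ$ for $K$ an $\ell_p$-ball or a Hanner polytope. Here I would use the explicit description of the shortest closed characteristic of $\partial(K\times K^\circ)$, equivalently the shortest closed billiard trajectory in $K$ in the $K$-norm, whose length equals $4$; for these highly symmetric bodies it is a concrete back-and-forth trajectory, and I would thicken a neighbourhood of it into an embedded ball of capacity $4$ by an explicit, symmetry-respecting normal-form map that is compatible with the reduction along $\Sigma$. The projection case stated in the abstract then follows from the section case by polar duality, since $\ell_p$-balls are polar to $\ell_{p'}$-balls and the Hanner polytopes are closed under polarity, so that a projection of one body's polar is the polar of a section of the dual body.

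The step I expect to be the main obstacle is precisely this construction of a full-capacity symplectic ball, for two reasons. First, a generic symplectic embedding does not descend through reduction, so the ball must be built equivariantly with respect to the coisotropic structure on $\Sigma$; keeping the embedding both adapted and maximal simultaneously is delicate. Second, the extremal and near-extremal cases $p=1,\infty$ together with the Hanner polytopes are polytopes, so $\partial(K\times K^\circ)$ is only piecewise smooth and the relevant characteristics are broken billiard trajectories; carrying out the normal-form construction near such a trajectory, and verifying that the resulting ball has capacity exactly $4$ rather than something smaller, is where the genuine analytic and combinatorial work lies.
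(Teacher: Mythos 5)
Your framework---go up one dimension, realize $L\times L^\circ$ as the linear symplectic reduction of $K\times K^\circ$ along the coisotropic hyperplane $\{q_{n+1}=0\}$, and try to push a maximal symplectic ball through the reduction---is exactly the paper's starting point, and your duality bookkeeping $(K\cap H)^\circ=\pi(K^\circ)$ is correct. But the step you defer to the end, namely that an ambient capacity-$4$ ball ``adapted to the coisotropic data of $\Sigma$ descends to a symplectic ball of capacity $4$'' in the reduction, is not a delicate technicality: it is precisely the statement the paper does not know how to prove and explicitly formulates as an open problem (the Conjecture in Section 2, that the reduction of a ball along one Hamiltonian contains a symplectomorphic image of $B^{2N-2}$ of the same radius). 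A nonlinear symplectomorphic image $\phi(B^{2(n+1)}(R))\subset K\times K^\circ$ meets the fixed \emph{linear} hyperplane $\Sigma$ in a set whose quotient by the characteristic lines has no reason to be a ball; one would have to make $\phi$ linearize $\Sigma$ while keeping $\phi(B)$ maximal, and no construction of this kind is known. The paper circumvents exactly this point: instead of producing a ball in the reduction, it identifies the reduced body with the ``entry half'' $\Sigma^+$ of $\phi(\partial B)\cap\Sigma$ and proves only a \emph{volume} lower bound $\int_{\Sigma^+}\omega^{N-1}\ge\int_{B^{2N-2}(R)}\omega^{N-1}$, via the Crofton-type formula of Lemma \ref{lemma:codim2} combined with a Borsuk--Ulam count (each complex circle meets $\Sigma$ at least twice because $\phi$ and $H$ are odd, and the positive intersections all lie on $\Sigma^+$). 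A volume bound is all Mahler's conjecture needs, and it is genuinely weaker than the ball embedding you are asking for.

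For Hanner polytopes your plan breaks one step earlier: it is not even known that $K\times K^\circ$ contains a symplectic ball of capacity (close to) $4$ when $K$ is a Hanner polytope---the paper cites this as an open question of Ostrover, the only known fact being that almost all characteristics on $\partial(K\times K^\circ)$ are closed with action $4$ (Balitskiy). Your proposal to ``thicken'' the shortest closed billiard trajectory into a capacity-$4$ ball would resolve that open question, and note how strong it is: since Hanner polytopes attain equality in Mahler's conjecture, a capacity-$4$ ball in $K\times K^\circ\subset\mathbb R^{2(n+1)}$ has volume $4^{n+1}/(n+1)!$ equal to the \emph{entire} volume of $K\times K^\circ$, so your embedded ball would have to fill the body up to measure zero; no neighbourhood-of-a-trajectory construction can afford to lose anything. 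The paper instead treats this case by the elementary integration argument of Section \ref{section:integration}: Stokes gives $\int_S\omega^{n+1}=\int_{\partial S}\lambda\wedge\omega^{n}\le A\int_{\Sigma^+}\omega^{n}$ once almost every boundary characteristic of action $A=4$ crosses $\{H=0\}$ positively (which follows from their closedness), and this yields $\vol S'\ge\frac{n+1}{A}\vol S=\frac{4^n}{n!}$. As written, both of your key steps coincide with the open problems the paper is structured to avoid, so the proposal does not constitute a proof.
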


Mahler's conjecture is invariant under linear transformations of $K$ and corresponding inverse transpose linear transformations of $K^\circ$. The conjecture is also invariant with respect to interchanging $K$ and $K^\circ$. Hence we obtain that Mahler's conjecture also holds for not necessarily orthogonal projections of $\ell_p$-balls and Hanner polytopes to hyperplanes. The case of hyperplane sections of the cube in Mahler's conjecture has attracted some attention \cite{ball1995,lopezreisner1998} itself and seems to have not been resolved before; although in \cite{lopezreisner1998} it was verified in dimensions up to $9$. The cited results were obtained with methods very different from ours. 

It is also worth noting that in \cite{meyerreisner2006} the non-symmetric version of Mahler's conjecture was verified for sections and projections of a simplex of codimension 1 and 2. As it is noted above, the non-symmetric case so far resists any sort of symplectic approach.

In the following sections we introduce the \emph{symplectic reduction approach} to Mahler's conjecture and show that it indeed works in the case of one-dimensional reduction of a symplectic ball or its slight generalization, that simply means a hyperplane section in the statement of the main theorem. Additionally, in Appendix \ref{section:capacity} we show that the Ekeland--Hofer--Zehnder capacity of centrally symmetric convex bodies does not decrease under linear symplectic reductions that we use in our approach to Mahler's conjecture, thus hinting that the symplectic reduction approach may be promising in resolving the conjecture in full generality.

\subsection*{Acknowledgments}
The author thanks Arseniy Akopyan for suggestions and corrections, Felix Schlenk for numerous useful remarks, Shlomo Reisner for remarks on previous work on the subject, and the unknown referee for numerous useful remarks and corrections.

\section{Symplectic reduction in Mahler's conjecture}

\subsection{Producing convex bodies as projections of a high-dimensional cross-polytope}

Recall that any centrally symmetric convex body $K\subset\mathbb R^n$ can be approximated in the Hausdorff metric by linear images of cross-polytopes $C\subset\mathbb R^N$, the polar bodies of cubes. For this, it is sufficient to take a dense set if pairs $\{x_i, -x_i\}_{i=1}^N$ in $\partial K$ and consider the linear map $f:\mathbb R^N\to \mathbb R^n$ that takes every basis vector $e_i\in\mathbb R^N$ to its corresponding $x_i$. Since the unit cross-polytope $C$ is the convex hull of $\{e_i,-e_i\}_{i=1}^N$ by definition, $f(C)$ is contained in $K$ and $\varepsilon$-approximates $K$ if $\{x_i, -x_i\}_{i=1}^N$ is an $\varepsilon$-net of $\partial K$.

Since Mahler's conjecture itself is invariant under linear transformations, we may assume that such a linear image is an image of an orthogonal projection of a unit cross-polytope along some linear subspace $L\subset\mathbb R^N$. What happens to $K^\circ$ then? In fact, $K^\circ$ is then approximated by the section $C^\circ\cap L^\perp = (C/L)^\circ$, where $L^\perp\subset\mathbb R^N$ is the subspace orthogonal to $L$ and $C^\circ$ is the unit cube $[-1,1]^N$.

\subsection{Basics of symplectic geometry and symplectic reduction}

Let us translate the above picture to symplectic terms, showing what is happening with the body $S = C\times C^\circ$ when it produces the body $K\times K^\circ$, whose volume is the object of Mahler's conjecture. We first recall some basic definitions of symplectic geometry and refer the reader to the textbook \cite{cannasdasilva2008} for a detailed exposition. Let $q_1,\ldots, q_N$ be coordinates in $\mathbb R^N$ and $p_1,\ldots, p_N$ be the coordinates in its dual $\mathbb R^N$. The Cartesian product of $\mathbb R^N$ and its dual naturally carries a skew-symmetric non-degenerate bilinear form
\[
\omega( (p',q'), (p'',q'') ) = \sum_{i=1}^N \left( p_i'q''_i - p''_i q'_i \right),
\] 
which may be also written as 
\begin{equation}
\label{equation:standard-structure}
\omega = \sum_{i=1}^N dp_i\wedge dq_i
\end{equation}
in the notation of differential geometry. This $\omega$ is invariant when we linearly transform $\mathbb R^N$ and at the same time apply the inverse transpose linear transformation to the dual $\mathbb R^N$. The form $\omega$ vanishes on the $p$-subspace $\mathbb R^N\subset\mathbb R^{2N}$ and the $q$-subspace $\mathbb R^N\subset \mathbb R^{2N}$, hence those spaces are \emph{isotropic} with respect to $\omega$. They are also \emph{Lagrangian}, since they are maximal by inclusion among isotropic subspaces of $\mathbb R^{2N}$.

More generally, whenever a smooth manifold $M$ carries a two-form $\omega\in\Omega^2(M)$, which is closed, $d\omega = 0$, and is non-degenerate at every point $p\in M$ (that is, induces a non-degenerate bilinear form on the tangent space $T_pM$), we call $\omega$ a \emph{symplectic structure} on $M$. The Darboux theorem \cite[Theorem 8.1]{cannasdasilva2008} asserts that for every point $p\in M$ there exists a coordinate chart in a neighborhood of $p$, where $\omega$ has precisely the same form as in \eqref{equation:standard-structure}, with $2N$ equal to the dimension of $M$. A submanifold $L\subset M$ is then called \emph{isotropic}, is $\omega$ vanishes on $L$, and is called \emph{coisotropic}, if at every point $p\in L$ the tangent space $T_pL$ contains its $\omega$-orthogonal complement in $T_pM$. The passage to $\omega$-orthogonal subspace, for linear subspaces of $\mathbb R^{2N}$, interchanges isotropic and coisotropic subspaces.

We want to restate the section and projection construction for the product of a convex body and its polar in $\mathbb R^{2N}$ in symplectic terms. We take a linear subspace $L\subset\mathbb R^{2N}$, contained in $\mathbb R^N\subset\mathbb R^{2N}$ of $q$-coordinates and hence isotropic. We also take the orthogonal complement of $L$ with respect to the symplectic form $\omega$, the coisotropic subspace $L^\omega$. Since ``isotropic'' means that the restriction of $\omega$ to $L$ is zero, $L\subset L^\omega$ and $L^\omega$ is indeed a coisotropic subspace. Now the procedure to obtain $S'=K\times K^\circ$ from $S=C\times C^\circ$ is generalized as follows: We take the intersection of $S$ with $L^\omega$ and then take the projection along $L$:
\[
S' = (S\cap L^\omega)/L.
\]
The \emph{projection along $L$} is the linear quotient map $\mathbb R^{2N} \to \mathbb R^{2N}/L$, which we restrict to $L^\omega$. 

This construction is close to the notion of symplectic reduction, so let us also call this process \emph{reduction of $S$ along $L$}, see \cite[Chapter 24]{cannasdasilva2008}. In our case, we take the linear Hamiltonians $H_1,\ldots, H_{N-n}$ so that $L^\omega$ is the solution set of the system of equations
\begin{equation}
\label{equation:hamiltonians-zero}
H_1 = \dots = H_{N-n} = 0.
\end{equation}
We also take their respective Hamiltonian vector fields $X_1,\ldots, X_{N-n}$, defined by the identities
\[
\omega (X_i, Y) = - dH_i(Y)
\]
for any vector $Y$. Those vector fields are constant vectors in our case and they span the original subspace $L$. The vector fields $X_i$ Lie-commute, $[X_i, X_j]=0$ for any $i$ and $j$, since they are constant. Moreover, the Hamiltonians $H_i$ Poisson-commute, $\{H_i, H_j\} = 0$ for any $i$ and $j$, since they only depend on $p$ coordinates and the Poisson bracket is given in coordinates as
\[
\{F,G\} = \sum_{i=1}^N \left( \frac{\partial F_i}{\partial q_i}\frac{\partial G_i}{\partial p_i} - \frac{\partial G_i}{\partial q_i}\frac{\partial F_i}{\partial p_i} \right).
\]
Recall also that if functions $F$ and $G$ are considered as Hamiltonians and their corresponding Hamiltonian vector fields are $X_F$ and $X_G$ then 
\[
\{F,G\} = X_F(G) = - X_G(F).
\]
To summarize, we deal with a symplectic reduction, since we first solve the system of equations \eqref{equation:hamiltonians-zero} and then take the quotient along the group action generated by the flows of the corresponding Hamiltonian vector fields $X_1,\ldots, X_{N-n}$.

The symplectic volume form in dimension $2N$ is defined as $\frac{\omega^N}{N!}$. In Darboux coordinates it just equals
\[
\frac{\omega^N}{N!} = dp_1\wedge dq_1\wedge \dots \wedge dp_N\wedge dq_N,
\]
which corresponds up to sign with the standard volume in $\mathbb R^{2N}$, for example. Hence we may speak about estimating the symplectic volume of the product $K\times K^\circ$ from below in Mahler's conjecture.

\subsection{Linear reduction of a symplectic ball}
\label{section:linear-reduction-ball}
Let us test this kind of linear reduction on the body $S=B^{2N}\subset\mathbb R^{2N}$, the standard symplectic ball, given by
\[
\sum_{i=1}^N p_i^2 + q_i^2 \le 1 
\]
in Darboux coordinates of $\mathbb R^{2N}$. The unit ball is not expressed as $K\times K^\circ$ and therefore the linear reduction of the ball is not directly related to Mahler's conjecture.

Linear transformations preserving the ball $B^{2N}$ and the form $\omega$ are just the unitary group, which may be considered as thew group preserving the Hermitian form $\sigma+i\omega$, where $\sigma$ is the symmetric bilinear form corresponding to the ball and $\omega$ is the skew-symmetric form of the symplectic structure.

This means that any linear Lagrangian subspace of $\mathbb R^{2N}$ has all Hermitian products of its vectors real and may be unitarily (and therefore symplectically) transformed into the standard $q$-subspace $\mathbb R^N\subset\mathbb R^{2N}$, keeping the ball invariant. More generally, any isotropic subspace $L\subset\mathbb R^{2N}$ can also be symplectically transformed to the subspace with coordinates $q_1,\ldots, q_{N-n}$ arbitrary and all other coordinates zero and the ball will remain invariant. The coisotropic linear subspace $L^\omega$ is then defined by the equations
\[
p_1 = \dots = p_{N-n} = 0,
\]
and the reduction $(B^{2N}\cap L^\omega)/L$ is again a symplectic unit ball of dimension $2n$. 

We summarize that \emph{the linear reduction makes a ball of $2n$-volume $\frac{\pi^n}{n!}$ from the ball of $2N$-volume $\frac{\pi^N}{N!}$, which is in accordance (after scaling) with what we want to have with $C\times C^\circ$ and $K\times K^\circ$, i.e. to make $\frac{4^n}{n!}$ out of $\frac{4^N}{N!}$.}

\subsection{Nonlinear symplectomorphic images of the ball}

Now recall the fact that $C\times C^\circ$ is symplectically a ball of radius $\sqrt{4/\pi}$ in a certain sense. In fact we only need that it can be approximated in the Hausdorff metric by symplectomorphic images of balls of radius tending to $\sqrt{4/\pi}$, this is discussed in Section \ref{section:balls} below, our construction essentially uses the ideas and pictures from~\cite[Chapter 3]{schlenk2005}. 

In symplectic terms, we are essentially studying the following question (after rescaling to get rid of the multiplier $\sqrt{4/\pi}$): A symplectomorphism $\phi :\mathbb R^{2N} \to \mathbb R^{2N}$ sends $L^\omega$ to a coisotropic submanifold $M = \phi(L^\omega)\subset \mathbb R^{2N}$ and sends $C\times C^\circ$ to an approximate ball, which we may scale to $B^{2N}$. We are trying to understand the symplectic volume of the intersection $B^{2N}\cap M$ after taking its quotient along the foliation into isotropic fibers
\[
\mathcal F = \{\phi(L+t)\}_{t\in L^\omega}.
\]

Eventually, we need to show that
\[
\vol \left( (B^{2N}\cap M)/\mathcal F \right) \ge \frac{\pi^n}{n!}.
\]

Let us again state the problem in more symplectic terms. We have $N-n$ smooth pairwise Poisson-commuting functions $H_1,\ldots, H_{N-n}$, which in our construction are odd functions without critical points. In this setting the manifold $M$ is given by
\[
M = \{x : H_1(x) = \dots = H_{N-n}(x) = 0\}.
\]
We consider their respective Hamiltonian vector fields $X_1,\ldots, X_{N-n}$ and the foliation $\mathcal F$ of $M$ obtained by integrating these pairwise Lie-commuting vector fields. Then we take the image of $B^{2N}\cap M$ in the quotient of $\mathbb R^{2N}$ by this foliation. This restatement with several Hamiltonian functions and vector fields makes us guess that Mahler's conjecture might be accessible by induction on the number of functions with the induction step given by the following conjecture (or a version of it):

\begin{conjecture}
Assume $B^{2N}\subset \mathbb R^{2N}$ is the standard ball and $H : \mathbb R^{2N}\to \mathbb R$ is an odd smooth function without critical points, with Hamiltonian vector field $X_H$ and foliation into its trajectories $\mathcal F$. Then the reduction 
\[
(B^{2N}\cap \{H=0\})/\mathcal F
\]
contains a symplectomorphic image $\phi(B^{2N-2})$ with an odd smooth symplectomorphism $\phi$.
\end{conjecture}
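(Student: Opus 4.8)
The plan is to exploit the oddness of $H$ to produce the ambient symmetry that $\phi$ must respect, and then to compare the nonlinear reduction with its linear model. Since $H$ is odd we have $H(0)=0$, the antipodal map $\sigma(x)=-x$ is a symplectomorphism preserving both $B^{2N}$ and $\Sigma:=\{H=0\}$, and it satisfies $\sigma_*X_H=-X_H$; hence $\sigma$ permutes the leaves of $\mathcal F$, fixes the leaf $\gamma_0$ through the origin, and descends to an involution $\iota$ of the reduced space fixing the base point $*=[\gamma_0]$. I want to show that the image $R$ of $B^{2N}\cap\Sigma$ in the reduction contains a unit ball centred at $*$ together with an embedding intertwining $\iota$ with the antipodal involution of $B^{2N-2}$, which is precisely the requirement that $\phi$ be odd. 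The natural model to compare against is the linear one: replacing $H$ by its linearization $\ell(x)=dH_0(x)$ produces, by the computation of Section~\ref{section:linear-reduction-ball}, a reduction equal to the unit ball $B^{2N-2}$ on the nose. Thus the content of the conjecture is that passing from $\ell$ to the genuine nonlinear $H$ does not shrink the reduced body below the unit ball.

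First I would put $H$ into normal form near $\gamma_0$. Because $H$ has no critical points, the Darboux--Carath\'eodory theorem \cite{cannasdasilva2008} furnishes symplectic coordinates in a neighbourhood of the origin in which $H=p_N$; averaging the construction over $\sigma$ makes these coordinates odd, so that after an odd symplectomorphism $\Psi$ the characteristics become the straight lines in the $q_N$-direction and the reduction is the linear projection forgetting $(p_N,q_N)$ after setting $p_N=0$. In these coordinates the question becomes a purely Euclidean one about the deformed body $\Psi^{-1}(B^{2N})$: its slice by $\{p_N=0\}$, projected to the $(p_1,q_1,\dots,p_{N-1},q_{N-1})$-space, should contain a unit ball. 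I would then interpolate, setting $H_s=(1-s)\,\ell+sH$ (a family of odd functions, so the symmetry is preserved throughout) and running an equivariant Moser-type argument to transport the unit ball available at $s=0$ through the family of reductions $R_s$ up to $s=1$, choosing the generating vector field so as to respect both the ball and the oddness at each stage.

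The hard part, and the reason the statement is posed only as a conjecture, is to keep the \emph{symplectic width} from collapsing along this deformation (and, more crudely, to cope with the fact that the intermediate $H_s$ need not remain free of critical points, so that the reductions $R_s$ may degenerate). The capacity monotonicity established in Appendix~\ref{section:capacity} guarantees only that $c_{EHZ}(R)\ge c_{EHZ}(B^{2N-2})=\pi$, and a lower bound on the Ekeland--Hofer--Zehnder capacity does not by itself force a full unit ball to embed; what one really needs is a lower bound on the Gromov width, i.e.\ a genuine non-squeezing statement for the reduction in the spirit of \cite{hofer1994}. The geometric source of the difficulty is that the characteristic flow of $X_H$ may \emph{fold} the slice $\Sigma\cap B^{2N}$ over itself: a leaf can enter the ball far from any transverse section, so the crude construction realizing the reduction as a symplectic slice $\{H=0,\ G=0\}\cap B^{2N}$ yields in general only a ball strictly smaller than $B^{2N-2}$, and recovering the missing radius requires exploiting precisely these extra leaves. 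Controlling this folding quantitatively, while simultaneously maintaining the antipodal symmetry that delivers the oddness of $\phi$, is the main obstacle, and I expect any complete proof to do its real work here.
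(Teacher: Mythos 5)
This statement is not proved in the paper at all: it is posed as an open conjecture, offered as the hoped-for induction step of the symplectic-reduction approach, and the paper's actual results (Theorems \ref{theorem:mahler-lph} and \ref{theorem:mahler-hh}) are obtained by deliberately weaker substitutes --- the Crofton-type volume bound of Lemma \ref{lemma:codim2} and the action/volume estimate \eqref{equation:reduction-action-bound} --- precisely because the ball-embedding statement here is out of reach. So there is no proof of the statement to compare yours against, and a correct review of your attempt must be measured against the statement itself.

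Measured that way, your proposal is not a proof, as you yourself concede. The preparatory steps are correct and worth recording: oddness of $H$ makes the antipodal map $\sigma(x)=-x$ a symplectomorphism preserving $B^{2N}$ and $\{H=0\}$ with $\sigma_*X_H=-X_H$, so it descends to an involution of the reduction fixing the class of the leaf through the origin; the linear model $\ell=dH_0$ reduces $B^{2N}$ exactly to $B^{2N-2}$; and a normal form $H=p_N$ near the origin is available. But the entire content of the conjecture is the step you flag and leave open: a non-squeezing-type lower bound on the Gromov width of the \emph{nonlinear} reduction, stable along the interpolation $H_s=(1-s)\ell+sH$. As you note, the intermediate $H_s$ may acquire critical points, so the family of reductions can degenerate and an equivariant Moser argument has no a priori reason to run; and the ``folding'' of leaves re-entering the ball is exactly the phenomenon that defeats the naive slice construction. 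Two further inaccuracies are worth naming. First, the capacity monotonicity of Theorem \ref{theorem:reduction-capacity} that you invoke is proved only for \emph{linear} reductions of centrally symmetric convex bodies, so even the bound $c_{EHZ}(R)\ge\pi$ you grant yourself is not established here: the nonlinear reduction $R$ need not be convex, nor, a priori, even a Hausdorff manifold as a quotient, so $c_{EHZ}(R)$ is not obviously meaningful. Second, the Darboux--Carath\'eodory normal form is local near the origin, while the conjecture demands global control of $(B^{2N}\cap\{H=0\})/\mathcal F$; nothing in your sketch bridges that. In short, your write-up correctly locates the obstruction --- which is consistent with the paper's own reason for leaving this as a conjecture --- but it establishes nothing beyond the symmetry bookkeeping, and the statement remains open.
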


\subsection{A similar estimate for the Riemannian volume}

We guess that the symplectic reduction construction makes sense because this symplectic construction has a simpler Riemannian version with a certain volume estimate. From the construction in Section~\ref{section:balls} it is clear that the map $\phi : \mathbb R^{2N}\to\mathbb R^{2N}$ can be chosen to be odd, $\phi(-x) = -\phi(x)$, and the manifold $M$ is then centrally symmetric around the origin. Then the Borsuk--Ulam theorem, applied to the odd map $\phi^{-1}$, asserts that for any radius $r$ the set $S^{2N-1}(r)\cap M$ of dimension $N+n-1$ intersects every $(N-n)$-dimensional equatorial subsphere $\Sigma \subset S^{2N-1}(r)$ at least twice, see the details in \cite[Section 2]{ahk2016} (the idea essentially goes back to \cite{barlov1982}), where this idea produces another proof of Vaaler's theorem on sections of the cube.

These Borsuk--Ulam type considerations are sufficient to invoke Crofton's formula and conclude that the $(N+n)$-dimensional Riemannian volume of $B^{2N}\cap M$ is at least $\frac{\pi^{\frac{N+n}{2}}}{\frac{N+n}{2}!}$. This argument is an elementary case of Gromov's ``waist of the sphere'' theorem \cite{grom2003}. Of course, the $(N+n)$-dimensional Riemannian volume of $B^{2N}\cap M$ is not the same as the symplectic volume of this manifold. Moreover, we also have to take the quotient of this manifold by the isotropic foliation $\mathcal F$ in order to make its symplectic volume meaningful. What is possible to obtain from a Crofton-type argument in a particular case is given around Lemma \ref{lemma:codim2} below.

\section{Convex symplectic balls inside some Lagrangian products}
\label{section:balls}

In the previous section we have found some hints that certain symplectic reductions of a symplectic ball behave well in terms of the volume of the reduction. Now we are going to remind the technique of~\cite{schlenk2005} that allows to show that the product $C\times C^\circ$ for $C=[-1,1]^N$ can in fact be approximated by symplectomorphic images of the standard ball with arbitrary precision. 

Here we are going to prove a slight generalization of the mentioned fact about symplectic balls and $C\times C^\circ$. We  give some freedom and approximate the $C\times C^\circ$ by a symplectic image of $B^{2N}(R)$ for $R$ arbitrarily close to $\sqrt{4/\pi}$, but not the precise $R=\sqrt{4/\pi}$. This does not affect the application to Mahler's conjecture since we are free to pass to the limit $R\to\sqrt{4/\pi}$. Next, we consider a convex body $K\subset\mathbb R^N$ equal to the unit ball of the $\ell_p$ norm (with $1<p<+\infty$) and its product $K\times K^\circ$; $C\times C^\circ$ is a limit case $p=+\infty$ of such products. Again, in the application to Mahler's conjecture we are free to pass to the limit.

\begin{proposition}
\label{proposition:convex-lp}
If $K\subset \mathbb R^N$ is the unit ball of an $\ell_p$ norm $(1 < p < +\infty)$, then $K\times K^\circ$ contains a symplectomorphic image $\phi(B^{2N}(R))$ of the ball $B^{2N}(R)$ for $R$ arbitrarily close to $\sqrt{4/\pi}$. Moreover, all images $\phi(B^{2N}(r))$ for $0<r\le R$ may be assumed strictly convex and $\phi$ may be chosen to be odd, $\phi(-x) = -\phi(x)$.
\end{proposition}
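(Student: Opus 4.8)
The plan is to construct $\phi$ explicitly as an odd symplectomorphism, building it up from the elementary two-dimensional case and exploiting the abundant symmetry of the $\ell_p$ ball. First I would record the volume constraint: since $\vol(K\times K^\circ)=\vol_N K\cdot\vol_N K^\circ$ and this product is at least $4^N/N!=\vol B^{2N}(\sqrt{4/\pi})$ (with equality only in the limits $p\to 1,\infty$), there is genuine room to place a ball of radius $R<\sqrt{4/\pi}$ with nonempty complement, and the strict inequality $R<\sqrt{4/\pi}$ is exactly what lets us absorb the corner-rounding that appears as $p\to\infty$. I would then use the coordinate-permutation and sign-change symmetries of $K$ (which are symplectic and odd) to normalize the construction and, at the end, to symmetrize $\phi$ so that $\phi(-x)=-\phi(x)$.

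The heart of the argument is the construction of $\phi$ itself. I would work in symplectic polar-type coordinates adapted to the gauges $\|q\|_K=\|q\|_p$ and $\|p\|_{K^\circ}=\|p\|_{p'}$ (with $1/p+1/p'=1$), so that $K\times K^\circ$ is described by $\{\|q\|_p\le 1,\ \|p\|_{p'}\le 1\}$. Following the pictures of Schlenk, I would build $\phi$ as a monotone \emph{radial filling}: a smooth nested family of strictly convex, centrally symmetric bodies $B_r$, $0<r\le R$, starting from a point, with $B_R\subset K\times K^\circ$, each $B_r$ realized as the image of $B^{2N}(r)$ under one common map. To keep the map symplectic I would realize it through generating functions and Hamiltonian flows rather than as a bare volume-preserving map, matching the characteristic foliation of $\partial B^{2N}(r)$ (great circles) to that of $\partial B_r$; the Legendre duality between $\partial K$ and $\partial K^\circ$ along conormal directions is what pairs the $q$- and $p$-coordinates correctly and makes this matching possible. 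In the base case $N=1$ this reduces to the classical fact that any centrally symmetric planar domain of area $A$ is the image of a disk of area $A$ under an odd, area-preserving map whose sublevel images are all convex.

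The main obstacle I expect is the simultaneous control of three conditions that pull in different directions: the map must be \emph{symplectic}, the nested images $\phi(B^{2N}(r))$ must all be \emph{strictly convex}, and the outermost image must actually fit inside $K\times K^\circ$. Convexity is not preserved by general symplectomorphisms, so the delicate point is to choose the radial profiles in the Schlenk construction monotone, and the coupling between coordinate planes mild enough, that convexity survives at every level while the symplectic condition is maintained exactly. I would expect that this is where the restriction $1<p<\infty$ (smoothness and strict convexity of $K$) and the slack $R<\sqrt{4/\pi}$ are both genuinely used: smoothness to run the Hamiltonian-flow construction without corner singularities, and the slack to leave a collar between $\phi(B^{2N}(R))$ and $\partial(K\times K^\circ)$ into which the convex sublevel sets can be nudged. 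The passage to the limits $p\to 1,\infty$ and $R\to\sqrt{4/\pi}$ is then handled by Hausdorff approximation, as announced.
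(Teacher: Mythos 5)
Your proposal has the right ingredients in the background (Schlenk's pictures, a two-dimensional base case, the tension between symplecticity and convexity, oddness via symmetrization), but the mechanism that actually makes the paper's proof work is missing, and the substitute you propose does not constitute a construction. The paper does not build $\phi$ as a ``radial filling'' of $K\times K^\circ$ by matching characteristic foliations via generating functions; it builds $\phi$ as a \emph{product of two-dimensional maps}, $\phi = f^{\times N}$, where $f:\mathbb C\to\mathbb R^2$ is a single area-preserving diffeomorphism satisfying $|q(z)|^\alpha \le \pi|z|^2/4+\epsilon$ and $|p(z)|^\beta \le \pi|z|^2/4+\epsilon$ for conjugate exponents $\alpha,\beta$. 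Containment of $\phi(B^{2N}(R))$ in the $\ell_\alpha\times\ell_\beta$ product then follows by simply summing these inequalities over the $N$ coordinate planes, using $1/\alpha+1/\beta=1$; this is the entire containment argument, and your volume count $\vol(K\times K^\circ)\ge 4^N/N!$ cannot replace it (volume is necessary but nowhere near sufficient for a symplectic embedding, as non-squeezing shows). Your plan never commits to this product structure, and ``matching the characteristic foliation of $\partial B^{2N}(r)$ to that of $\partial B_r$'' is a strong condition one would still have to realize by some explicit device; as stated it is a restatement of the goal, not a step toward it.

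The second, more subtle gap concerns convexity, and here your base case is actually too weak even if you did take products. You ask only that the planar map have \emph{convex sublevel images}, i.e.\ that the push-forward $F(p,q)=|f^{-1}(p,q)|^2$ be quasi-convex. The paper explicitly warns that this is not enough: to get convexity of the $2N$-dimensional image $\{\sum_i F(p_i,q_i)\le R^2\}$ one needs $F$ to be a \emph{convex function}, since sums of convex functions are convex while sums of quasi-convex functions of different variables need not be. Arranging this is the real work of the paper's proof: one starts from $G(p,q)=4\max\{|q|^\alpha,|p|^\beta\}$ (whose sublevel sets have area exactly $A$), approximates by the strictly convex $G_N = c_N(|q|^{\alpha N}+|p|^{\beta N})^{1/N}$, and then carefully interpolates the exponents down to $(2,2)$ near the origin while preserving both strict convexity and the normalization $\area\{F\le A\}=A$, finally smoothing. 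With that $F$ in hand, $f$ is any area-preserving map pushing $|z|^2$ to $F$, odd because $F$ is even, and all the properties you were trying to juggle simultaneously (symplectic, nested strictly convex images, containment, oddness) fall out of the product structure at once. Without the convexity-of-$F$ trick and the product construction, your outline cannot be completed as written.
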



\begin{proof} 
We mostly repeat the argument of~\cite{schlenk2005}, see also~\cite{lmdsch2013}, with slight modifications. To avoid confusion with the symplectic coordinates $p$ and $q$ we denote the exponents by $\alpha$ and $\beta$ instead, so that 
\[
\frac{1}{\alpha} + \frac{1}{\beta} = 1.
\]
Now we want to show that the set given by the inequalities
\[
|q_1|^\alpha + \dots + |q_N|^\alpha \le 1, \quad |p_1|^\beta + \dots + |p_N|^\beta \le 1
\]
contains a symplectic image of a ball with radius arbitrarily close to $\sqrt{4/\pi}$.

Start with an area and orientation preserving two-dimensional diffeomorphism $f :\mathbb C\to\mathbb R^2$ ($z\mapsto (q(z), p(z)$), such that 
\begin{equation}
\label{equation:eps-to-rect}
|q(z)|^\alpha \le \frac{\pi |z|^2}{4} + \epsilon, \quad |p(z)|^\beta \le \frac{\pi |z|^2}{4} + \epsilon,
\end{equation}
where $\epsilon$ is an arbitrarily small positive number. These inequalities can be achieved by an area and orientation preserving map because they mean that the disc of radius $r$ centered at the origin has to get into a rectangle of area slightly larger than $\pi r^2$. Indeed, we have the chain of inequalities
\[
|z|\le r\Rightarrow |q|^\alpha, |p|^\beta \le \frac{\pi r^2}{4} + \epsilon\Rightarrow |p|\cdot|q| \le \left( \frac{\pi r^2}{4} + \epsilon \right)^{\frac{1}{\alpha}+\frac{1}{\beta}} = \frac{\pi r^2}{4} + \epsilon\Rightarrow 4 |p|\cdot |q| \le \pi r^2 + 4 \epsilon,
\]
which proves the consistency of the areas.

We need a map $f$ such that the function given by $F(p,q) = |f^{-1}(p,q)|^2$ (in other words, a push-forward of $|z|^2$ by $f$) be a smooth and strictly convex function with unique minimum. The existence of a map $f$ producing a function $F$ with convex sublevel sets is geometrically intuitive. But we need a stronger property than the convexity of the sublevel sets, $F$ must be a convex function itself. We need this, because we then consider a sum of such functions of different variables and want this sum (and its sublevel sets) to remain convex. Let us start with the construction, first put 
\[
G(p,q) = 4 \max\{|q|^\alpha, |p|^\beta\},
\]
this is a convex function whose sublevel sets $\{ (p,q)\in \mathbb R^2\ |\ G \le A\}$ have area $A$ for $A\ge 0$, since  
\[
G(p,q)\le A\Leftrightarrow |q|^\alpha, |p|^\beta \le \frac{A}{4}
\]
and the area of this sublevel set then equals 
\[
4 \left( \frac{A}{4} \right)^{\frac{1}{\alpha}+\frac{1}{\beta}} = 4 \frac{A}{4}= A
\]

Then we perturb the function $G$ slightly to the new function $F$, which is smooth, strictly convex, has unique minimum at the origin, and has sublevel sets $\{F\le A\}$ of area $A$. For this, we first approximate $G$ by the strictly convex functions
\[
G_N(p,q) = c_N \left( |q|^{\alpha N} + |p|^{\beta N}\right)^{1/N},
\]
where $N$ is sufficiently large and the constant $c_N$ chosen to normalize the areas of the sublevel sets. Such $G_N$ has all the required properties except for smoothness at the origin and converges to $G(p,q)$ as $N\to\infty$.

Now it remains to modify $G_N$ near the origin to make it smooth and thus obtain $F$ with the required properties. Assume we start from a neighborhood of the origin, where after rescaling of the coordinates $p$ and $q$ we have
\[
F(p,q) = c\cdot (|p|^u + |q|^v)^{1/u+1/v}
\]
for $F(p,q)\le 1$, here we put $u = \alpha N, v=\beta N$ in the beginning of the procedure and introduce a constant $c$.  This function $F$ is strictly convex in the range $1/2\le F(p,q) \le 1$, which is expressed as a strict inequality in terms of its derivatives up to second order. We want to modify it so that it remains the same at the level set $\{F(p,q)=1\}$ and is expressed by the similar formula at the level set $\{F(p,q)=1/2\}$ with different $u$ and $v$, but keeping the required properties of strict convexity and areas of sublevel sets. 

Consider $u$ and $v$ as not constants, but slightly varying functions with sufficiently small first and second derivatives. The convexity of $F$, as expressed in terms of its second derivatives, will be preserved if the first and second derivatives of $u$ and $v$ are kept sufficiently small in the required range. The requirement that $\area\{F\le A\} = A$ can also be kept by considering the constant $c$ also varying with $p$ and $q$, again, if the first and second derivatives of $u$ and $v$ are kept small then the first and second derivatives of the coefficient $c$ will also be kept small, not violating the convexity of $F$. 

Hence there exists a small neighborhood of the parameter pair $(u,v)$ such that for any $(u',v')$ in this neighborhood we can modify $F$ in the set $\{F(p,q)<1\}$ so that it is expressed by the same formula with new parameters $u',v'$ at the level set $\{F(p,q)=1/2\}$ keeping its required properties. After such a step we may rescale the coordinates and repeat the procedure. We aim at the pair of parameters $(u,v)=(2,2)$. From compactness considerations it is indeed possible to reach this value in a finite number of steps. Thus constructed function will be just $\pi(p^2+q^2)$ at a neighborhood of the origin. 

Note that thus constructed $F$ is not infinitely smooth because of using $|p|$ and $|q|$, but keeping $u,v\ge 2$ ensures that it has continuous second derivatives at least. After that it is possible to make it infinitely smooth by approximating it together with its first and second derivatives by a sequence of infinitely smooth functions $(F_n)$, the strict convexity assumption, expressed in terms of second derivatives, will be satisfied for sufficiently close approximation. The assumption $\area \{F_n\le A\} = A$ will be met, if we modify $F_n$ by a factor function $c_n(F_n(p,q))$, whose first and second derivatives will also tend to zero as $n\to\infty$, not spoiling the convexity of $c_nF_n$ for sufficiently large $n$. Eventually, for sufficiently large $n$ the infinitely smooth function $c_nF_n$ will also have the required properties and may be chosen as our final $F$.

After this, it remains to design an area-preserving diffeomorphism $f$ that transforms $|z|^2$ to $F$, which is possible because it only requires the assumption $\area \{F \le A\} = A$ for $A\ge 0$ and the good structure of $F$ near the origin. Near the origin $f$ may be chosen linear, from our construction of $F$, and it is possible to have $f$ odd in this setting, because $F$ we may assume that $F$ was constructed even.

Now the Cartesian product $f^{\times N}$ transforms the ball $B^{2N}(R)$ to the set defined by the equation
\[
F(p_1, q_1) + \dots + F(p_N, q_N) \le R^2,
\]
from the smoothness and strict convexity of $F$ it follows that this set is smooth and strictly convex as well. If $F$ does not deviate much from $G$ and satisfies (\ref{equation:eps-to-rect}) then we have
\[
\sum_i |q_i(z)|^\alpha \le \frac{\pi |z|^2}{4} + n\epsilon,\quad \sum_i |p_i(z)|^\beta \le \frac{\pi |z|^2}{4} + n\epsilon.
\]
This means that the image of the ball of radius $\sqrt{ \frac{4}{\pi} (1 - n\epsilon)}$ fits into the product of the unit ball of $\ell_\alpha$ norm and the unit ball of $\ell_\beta$ norm, which completes the proof.
\end{proof}

\begin{remark}
\label{remark:hamiltonian}
In the above construction $\phi$ can be assumed to be a linear symplectomorphism in a small neighborhood of the origin. Hence it can be connected to a linear symplectomorphism by the smooth family of symplectomorphisms
\[
h_t(z) = \frac{1}{t}\phi(tz),
\]
and then to the identity by a family of linear symplectomorphisms. Therefore $\phi$ is smoothly isotopic to the identity through symplectomorphisms and is a Hamiltonian symplectomorphism, that is a symplectomorphism given by integration of a time-dependent Hamiltonian vector field of the form $X_t = \frac{\partial h_t(z)}{\partial t}$. Here we use that in $\mathbb R^{2n}$ any vector field $X$ preserving the symplectic structure $\omega$ (that is, $L_X\omega = 0$) is a Hamiltonian vector field.
\end{remark}

\begin{remark}
The referee asked if the construction of this section passes to Orlicz spaces, which generalize $\ell_p$ spaces in a certain way. We have no answer to this questions, but think it may be interesting.
\end{remark}

\section{Reduction by one dimension}

\subsection{Using a Crofton-type argument}

Now we return to applying the symplectic reduction to Mahler's problem. We consider $C\times C^\circ$, where $C$ is the unit cross-polytope and $C^\circ$ is the unit cube, or slightly more generally, $K\times K^\circ$, where $K$ is an $\ell_p$ ball and $K^\circ$ is its dual $\ell_q$ ball. The product of the cross-polytope and the cube is a limit case of such $K\times K^\circ$ when $p\to 1$.

Proposition~\ref{proposition:convex-lp} gives us a function $F$ on $\mathbb R^{2N}$ which is smooth, even, strictly convex, having unique minimum at the origin, and whose sublevel set $\{F \le \pi R^2\}$ is a symplectic ball (of radius $R$ before the symplectic transformation) for every $R$ and lies in $K\times K^\circ$ for $R<\sqrt{\frac{4}{\pi}} - \varepsilon$ ($\varepsilon$ will tend to $0$ once we need it). This gives a suitable approximation of $K\times K^\circ$ with symplectic balls.

In order to find the volume of the symplectic reduction of the ball we may build a section of the symplectic reduction map. Generally, our use of Proposition~\ref{proposition:convex-lp} allows us to conclude that, when we reduce to $L^\omega/L$, the sets $\phi(B^{2N}(R))\cap L^\omega$ (here $R$ always denotes some radius less than $\sqrt{\frac{4}{\pi}}-\varepsilon$) are all strictly convex and smooth bodies, and their sections by $L + t$ are also smooth and strictly convex bodies of dimension $N-n$, or just points in the boundary case, or empty sets.

We are going to consider the case $N-n=1$, that is one Hamiltonian $H$ and one vector field $X_H$ in the symplectic description of the reduction. In this case the reduction has two natural sections, since it is the projection along the lines $L+t$ in the hyperplane $L^\omega$. In our setting, the integral curves of $X_H$ (the foliation $\mathcal F$) enter the ball $\phi(B^{2N}(R))$ once and leave it precisely once because of its convexity. Hence there is one way to choose the entry point of $\phi(B^{2N}(R))\cap (L+t)$, and the other way to choose the exit point, both giving a section of the quotient map of the reduction.

For brevity of notation, let us work in the coordinates before applying $\phi$, where $F(z)=|z|^2$, the symplectic balls in question are Euclidean balls, while $H$ is possibly non-linear. We distinguish between the cases when the integral curve of $X_H$ enters $B^{2N}(R)$ and exits it by the sign of the Poisson bracket 
\[
X_H(F) = \{H, F\} = - \{F,H\} = - X_F(H).
\]
The latter equation means that those cases are distinguished by the sign of the intersection between an oriented complex circle $C$, integral of the vector field $X_F$ in $\mathbb R^{2N}=\mathbb C^N$, and the hypersurface $\{H=0\}$, cooriented by the gradient of $H$.

Put $S^{2N-1}(R) = \partial B^{2N}(R)$ and $\Sigma = S^{2N-1}(R)\cap \{H=0\}$. In our case, with convexity assumptions, $\Sigma$ is  diffeomorphic to a $(2n-2)$-dimensional sphere. We split $\Sigma$ into two parts $\Sigma^+$ and $\Sigma^-$, depending on the sign of $X_F(H) = -X_H (F)$, which corresponds to the entry points and the exit points of the foliation of the ball. From the entry and exit description it follows that $\Sigma^+$ is another open $(2N-2)$-dimensional manifold projected diffeomorphically onto the symplectic reduction $(D^{2N}(R)\cap \{H=0\})/\mathcal F$. We use here the notation $D^{2N}(R)$ for an open ball in contrast with $B^{2N}(R)$, the closed ball.

To prove Mahler's conjecture in this particular case, it would be sufficient to show that $\Sigma^+$ with the symplectic structure $\omega$ contains a symplectic $(2N-2)$-ball of radius $R$, since the symplectic volume would then have the right estimate from below. It is not clear how to show this apart from the trivial case of the linear reduction of a ball from Section \ref{section:linear-reduction-ball}. But we can prove that in the case of reduction of a symplectic ball by one dimension the $\omega^n$-volume of $\Sigma^+$ is no less that the $\omega^n$-volume of the standard symplectic ball of radius $R$. This relies on the following Crofton-type formula:

\begin{lemma}
\label{lemma:codim2}
There exists a constant $c_N$ such that whenever $M$ is a $(2N-2)$-dimensional oriented submanifold, possibly with boundary, of the sphere $S^{2N-1}(R)$, then 
\[
\int_M \omega^{N-1} = c_N R^{2N-2} \int_{\{C\}} \#(C\cap M),
\]
where $\#(C\cap M)$ is the number of intersections of an $X_F$-integral oriented circle $C$ and $M$ counted with signs, and the integral on the right hand side is taken over all possible $C$ with respect to the unitary-invariant probability measure.
\end{lemma}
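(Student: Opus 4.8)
The plan is to identify both sides of the claimed identity as integrals over $M$ of differential forms tied together by the Hopf fibration. The $X_F$-integral circles are precisely the orbits of the circle action $z\mapsto e^{i\theta}z$ on $\mathbb C^N=\mathbb R^{2N}$ (the Hamiltonian flow of $F=|z|^2$ generates this rotation up to reparametrization, as a direct computation of $X_F$ shows). On a fixed sphere $S^{2N-1}(R)$ these orbits are the fibers of the Hopf fibration $\pi : S^{2N-1}(R)\to\mathbb{CP}^{N-1}$, and the space $\{C\}$ of circles carries its unitary-invariant Fubini--Study probability measure $d\mu$, which is exactly the ``unitary-invariant probability measure'' in the statement.

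First I would show that the restriction $\omega^{N-1}|_{S^{2N-1}(R)}$ is a \emph{basic} form for $\pi$, i.e.\ the pullback of a form on the base. It is horizontal because $\iota_{X_F}\omega=-dF$ vanishes on the tangent spaces of the level set $\{F=R^2\}=S^{2N-1}(R)$, whence $\iota_{X_F}\bigl(\omega^{N-1}|_{S}\bigr)=(N-1)\,(\iota_{X_F}\omega)|_S\wedge\omega^{N-2}|_S=0$; and it is invariant under the circle action because $\omega$ is. Consequently $\omega^{N-1}|_{S^{2N-1}(R)}=\pi^{*}\eta$ for a unique $(2N-2)$-form $\eta$ on $\mathbb{CP}^{N-1}$. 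Since $\omega$, the sphere, and the fibration are all $U(N)$-invariant, $\eta$ is a $U(N)$-invariant top-degree form on $\mathbb{CP}^{N-1}$, hence a constant multiple of the Fubini--Study volume, that is of $d\mu$; comparing the two sides under the scaling $z\mapsto Rz$, which multiplies $\omega$ by $R^{2}$, shows that this constant carries the factor $c_N R^{2N-2}$ with $c_N$ depending only on $(N-1)!$ and $\vol\mathbb{CP}^{N-1}$.

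Second I would apply the classical degree formula: for a smooth map $g=\pi|_M : M\to\mathbb{CP}^{N-1}$ between oriented manifolds of equal dimension $2N-2$ and a volume form $\nu$ on the target, one has $\int_M g^{*}\nu=\int_{\mathbb{CP}^{N-1}}\bigl(\sum_{x\in g^{-1}(\ell)}\operatorname{sign}\det dg_x\bigr)\,\nu$. Taking $\nu=d\mu$ and using $\omega^{N-1}|_M=(\pi|_M)^{*}\eta=c_N R^{2N-2}(\pi|_M)^{*}d\mu$ converts $\int_M\omega^{N-1}$ into $c_N R^{2N-2}$ times the $d\mu$-average of the signed count of preimages of $\ell$ in $M$, which is precisely $\#(C\cap M)$ for the fiber $C=\pi^{-1}(\ell)$. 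This is the asserted equality.

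The main obstacle is the bookkeeping of orientations and signs: I must check that the intersection sign of the $X_F$-oriented circle $C$ with the oriented submanifold $M$ inside $S^{2N-1}(R)$ agrees with the local degree $\operatorname{sign}\det d(\pi|_M)$ appearing in the pullback formula. This amounts to a single pointwise computation comparing the splitting $T_xS^{2N-1}(R)=\mathbb R X_F\oplus H_x$ into the line spanned by $X_F$ and a horizontal complement $H_x\cong T_{\pi(x)}\mathbb{CP}^{N-1}$, with the orientations induced by $\omega^{N-1}$ and by $d\mu$. One also needs transversality of $M$ to the generic fiber, which Sard's theorem guarantees for $\mu$-almost every $C$, and must verify that the pullback-degree identity is unaffected by $\partial M$, which holds because it is a pointwise change of variables with multiplicity rather than an application of Stokes' theorem.
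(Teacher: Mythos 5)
Your proof is correct and follows essentially the same route as the paper's: pass to the Hopf quotient $S^{2N-1}(R)\to\mathbb{CP}^{N-1}$, identify $\omega^{N-1}|_{S^{2N-1}(R)}$ with (a constant multiple of) the pullback of the Fubini--Study volume, and apply Sard's theorem together with the signed change-of-variables/degree formula for the map $\pi|_M$. You in fact supply details the paper leaves implicit --- the verification that the form is basic, the orientation bookkeeping, and the remark that the multiplicity formula is insensitive to $\partial M$ --- but the underlying argument is the same.
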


\begin{proof}
The radius $R$ just gives the scale factor so we can put $R=1$ in the proof. Then we observe that the trajectories of $X_F$ are complex circles and the quotient of $S^{2N-1}(R)$ by the foliation of the integral curves of $X_F$, the space of these circles, is then just the projective space $\mathbb CP^{N-1}$. The pullback of the Fubini--Study symplectic form $\omega_{FS}$ from $\mathbb CP^{N-1}$ (by definition) equals to the restriction of $\omega$ to the sphere $S^{2N-1}$ (maybe up to constant). 

Hence we consider the map $f : M \to \mathbb CP^{N-1}$ and integrate $f^* \omega_{FS}^{N-1}$ over $M$ in the left hand side of the required identity. This is essentially the same as integration of $\omega_{FS}^{N-1}$ over the projective space $\mathbb CP^{N-1}$, multiplied by the algebraic multiplicity of the map $f$ over a given point of $\mathbb CP^{N-1}$. Up to a measure zero set of critical values (by Sard's theorem) this algebraic multiplicity is well-defined, and $\omega_{FS}^{N-1}$ is a unitary-invariant density on the projective space, thus justifying the right hand side of the formula.
\end{proof}

Using the lemma, we obtain the following: Every $C$ intersects $\Sigma$ at least twice, here we use that $\phi$ and $H$ are odd and restrict $H$ to an odd function on the circle $C$. Positive intersections are collected on $\Sigma^+$, negative are collected on $\Sigma^-$. Hence 
\[
\int_{\Sigma^+} \omega^n = c_n R^{2n} \int_{\{C\}} \#(C\cap \Sigma^+) \ge c_n R^{2n} \int_{\{C\}} 1 = \frac{1}{2} c_n R^{2n}.
\]
In the case of linear $H$ almost every $C$ intersects $\{H=0\}$ precisely twice with opposite signs, and therefore the equality is attained. In the linear case $\Sigma^+$ will be symplectomorphic to the standard $B^{2n}(R)$ and therefore, in the non-linear case we have
\[
\int_{\Sigma^+} \omega^n \ge \int_{B^{2n}(R)} \omega^n,
\]
which establishes the desired volume estimate. We summarize the result of this section in:

\begin{theorem}
\label{theorem:mahler-lph}
Mahler's conjecture holds for hyperplane sections of $\ell_p$ balls $(1< p< +\infty)$ and their projections to hyperplanes. As a limit case, it holds for hyperplane section of a cube and respective projections of the cross-polytope to hyperplanes.
\end{theorem}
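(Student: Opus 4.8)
The plan is to realize each hyperplane section or projection of an $\ell_p$-ball as a \emph{one-dimensional} symplectic reduction of a Lagrangian product, and then to bound the volume of that reduction below by the volume of a reduced symplectic ball. First I would fix $K_0\subset\mathbb R^N$ to be the unit $\ell_p$-ball ($1<p<+\infty$) and choose $L=\mathbb R e\subset\mathbb R^N\subset\mathbb R^{2N}$, a line in the $q$-coordinates, so that $N-n=1$ with $n=N-1$. Computing $S'=(S\cap L^\omega)/L$ for $S=K_0\times K_0^\circ$ directly from the definitions, one finds $L^\omega=\{(p,q):\langle p,e\rangle=0\}$, whence $S'=\bigl(\mathrm{proj}_{e^\perp}K_0\bigr)\times\bigl(K_0^\circ\cap e^\perp\bigr)=\tilde K\times\tilde K^\circ$, where $\tilde K=\mathrm{proj}_{e^\perp}K_0$ is a hyperplane projection of the $\ell_p$-ball and $\tilde K^\circ=K_0^\circ\cap e^\perp$ is the corresponding section of the $\ell_q$-ball. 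Since the symplectic volume form coincides with Lebesgue measure, the symplectic volume of $S'$ is exactly $\vol\tilde K\cdot\vol\tilde K^\circ$, the Mahler product in dimension $n=N-1$. Because Mahler's conjecture is invariant under $K\leftrightarrow K^\circ$ and $p$ ranges over all of $(1,+\infty)$, handling this one reduction for every $p$ simultaneously covers both sections and projections of all $\ell_p$-balls, and letting $p\to1,+\infty$ recovers the cross-polytope and the cube.

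Next I would insert a symplectic ball and use monotonicity. Proposition~\ref{proposition:convex-lp} supplies an odd symplectomorphism $\phi$ with $\phi(B^{2N}(R))\subset K_0\times K_0^\circ$ for all $R<\sqrt{4/\pi}-\varepsilon$, with all intermediate balls strictly convex. Reduction is inclusion-monotone, so the reduced volume of $S'$ is at least that of the reduction of $\phi(B^{2N}(R))$; and since $\phi$ is a symplectomorphism, the latter equals the reduced volume of the Euclidean ball $B^{2N}(R)$ along the coisotropic hypersurface $M=\phi^{-1}(L^\omega)=\{H=0\}$, where $H$ is odd and free of critical points (odd because $\phi$ and the defining linear Hamiltonian of $L^\omega$ are both odd), though now possibly nonlinear. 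Working in these coordinates, strict convexity forces every integral curve of $X_H$ to meet $S^{2N-1}(R)=\partial B^{2N}(R)$ in exactly one entry and one exit point. I would then set $\Sigma=S^{2N-1}(R)\cap\{H=0\}$ and split $\Sigma=\Sigma^+\sqcup\Sigma^-$ according to the sign of $X_F(H)=-X_H(F)$; the part $\Sigma^+$ collecting, say, the exit points projects diffeomorphically onto the reduction and is a genuine section of the quotient map, so that the reduced symplectic volume equals $\frac1{n!}\int_{\Sigma^+}\omega^n$.

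The decisive step is the Crofton estimate of Lemma~\ref{lemma:codim2} applied to $M=\Sigma^+$: $\int_{\Sigma^+}\omega^n=c_nR^{2n}\int_{\{C\}}\#(C\cap\Sigma^+)$, the integral taken over the unitary-invariant family of $X_F$-circles. Each such circle $C$ is centrally symmetric and $H$ is odd, so $H|_C$ is an odd continuous function on a circle; it must change sign, and therefore $C$ carries at least one positively cooriented crossing of $\{H=0\}$, giving $\#(C\cap\Sigma^+)\ge1$ almost everywhere. To pin down the constant I would calibrate against the linear model: when $H$ is linear, $\{H=0\}$ is a coordinate hyperplane, almost every $C$ crosses it in exactly two points of opposite sign so $\#(C\cap\Sigma^+)=1$, and $\Sigma^+$ is symplectomorphic to the standard $B^{2n}(R)$. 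Combining, $\int_{\Sigma^+}\omega^n\ge\int_{B^{2n}(R)}\omega^n$, hence the reduced volume is at least $\vol B^{2n}(R)=\frac{\pi^nR^{2n}}{n!}$. Letting $R\to\sqrt{4/\pi}$ and the approximation error tend to $0$ gives $\vol\tilde K\cdot\vol\tilde K^\circ\ge\frac{\pi^n(4/\pi)^n}{n!}=\frac{4^n}{n!}$, which is Mahler's conjecture in dimension $n$.

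I expect the main obstacle to lie in the two places where geometry is converted into a clean numerical bound: first, verifying that $\Sigma^+$ is an honest section of the reduction carrying the reduced symplectic form, so that $\frac1{n!}\int_{\Sigma^+}\omega^n$ genuinely computes the reduced volume; and second, the sign bookkeeping in the Crofton step, namely that oddness of $H$ forces a \emph{positively} cooriented intersection on almost every circle (not merely some intersection), with the linear reduction serving as the sharp equality case that fixes $c_n$. By comparison, the transversality and ``almost every'' caveats in Lemma~\ref{lemma:codim2}, and interchanging inclusion-monotonicity of reduction with the limits $R\to\sqrt{4/\pi}$, $\varepsilon\to0$ and $p\to\{1,+\infty\}$, should be routine once this core is secured.
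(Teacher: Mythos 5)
Your proposal is correct and follows essentially the same route as the paper: reduce $K_0\times K_0^\circ$ along a line in the $q$-coordinates, insert the odd convex symplectic ball from Proposition~\ref{proposition:convex-lp}, pass to the coordinates where the ball is Euclidean and $H$ is odd, split $\Sigma=\Sigma^+\sqcup\Sigma^-$ by the sign of $X_F(H)$, and apply the Crofton formula of Lemma~\ref{lemma:codim2} with the oddness of $H|_C$ forcing a positive crossing on almost every Hopf circle, calibrating the constant against the linear reduction. The only cosmetic difference is that you make the inclusion-monotonicity of the reduced volume and the role of $\Sigma^+$ as a section explicit, which the paper treats implicitly.
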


We may recognize the central hyperplane sections $K\subset \mathbb R^n$ of linear images of cubes $C\subset \mathbb R^{n+1}$ as centrally symmetric polytopes with $2n+2$ facets. Similarly, we may recognize their polars as centrally symmetric polytopes in $\mathbb R^n$ with $2n+2$ vertices. The latter case is clear since we may map the vertices of a cross-polytope $C\subset\mathbb R^{n+1}$ to the vertices of the given polytope and extend this map linearly; the former case is the polar of this. Hence we obtain:

\begin{corollary}
Mahler's conjecture in $\mathbb R^n$ holds for centrally symmetric polytopes having either $2n+2$ facets or $2n+2$ vertices.
\end{corollary}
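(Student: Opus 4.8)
The plan is to realize each of the two classes of polytopes as a linear image of either a central hyperplane section of a cube or a hyperplane projection of a cross-polytope, and then to invoke Theorem~\ref{theorem:mahler-lph} together with the linear and polar invariance of the volume product recorded in the introduction.

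First I would treat the polytopes with $2n+2$ vertices. If $Q\subset\mathbb R^n$ is centrally symmetric with $2n+2$ vertices, these come in antipodal pairs $\pm v_1,\ldots,\pm v_{n+1}$, so that $Q=\conv\{\pm v_1,\ldots,\pm v_{n+1}\}$. Let $C=\conv\{\pm e_1,\ldots,\pm e_{n+1}\}\subset\mathbb R^{n+1}$ be the standard cross-polytope and define the linear map $B:\mathbb R^{n+1}\to\mathbb R^n$ by $Be_i=v_i$. Since $Q$ is full-dimensional, the $v_i$ span $\mathbb R^n$, hence $B$ is surjective and is, up to a linear isomorphism of the target, an orthogonal projection of $\mathbb R^{n+1}$ onto a hyperplane. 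As $B$ carries the vertices of $C$ onto the vertices of $Q$, we get $B(C)=Q$, so $Q$ is a hyperplane projection of the cross-polytope, and Theorem~\ref{theorem:mahler-lph} gives the conjecture for $Q$.

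Next I would pass to the polytopes with $2n+2$ facets. The quickest route is polarity: if $P\subset\mathbb R^n$ is centrally symmetric with $2n+2$ facets, then $P^\circ$ is centrally symmetric with $2n+2$ vertices, so the previous paragraph applies to $P^\circ$, and the invariance of the volume product under $K\leftrightarrow K^\circ$ yields the conjecture for $P$. Alternatively, one can argue directly: the facets of $P$ lie on $n+1$ antipodal pairs of hyperplanes $\{\langle a_i,x\rangle=\pm 1\}$, so $P=\{x:|\langle a_i,x\rangle|\le 1,\ i=1,\ldots,n+1\}$, and the map $Ax=(\langle a_1,x\rangle,\ldots,\langle a_{n+1},x\rangle)$, which is injective because $P$ is bounded, identifies $P$ with $A(\mathbb R^n)\cap[-1,1]^{n+1}$, a central hyperplane section of the cube; Theorem~\ref{theorem:mahler-lph} then applies once more.

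There is no serious obstacle here: the entire analytic content sits in Theorem~\ref{theorem:mahler-lph}, and what remains is the standard dictionary between ``$n+1$ antipodal pairs of facets (respectively vertices)'' and ``central hyperplane section of a cube (respectively hyperplane projection of a cross-polytope).'' The only points that need a word of care are that the spanning and boundedness hypotheses make $B$ surjective and $A$ injective, so that the projection is genuinely onto a hyperplane and the section is genuinely $n$-dimensional, and that polytopes with fewer than $2n+2$ facets or vertices are covered as degenerate cases, obtained by letting some defining functionals become redundant, respectively by allowing some vertices to coincide.
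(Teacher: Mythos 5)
Your proposal is correct and follows essentially the same route as the paper: realize a polytope with $2n+2$ vertices as a linear image of the cross-polytope $C\subset\mathbb R^{n+1}$ by mapping vertices to vertices, handle the $2n+2$-facet case by polarity (equivalently, as a hyperplane section of the cube), and invoke Theorem~\ref{theorem:mahler-lph} with the linear and polar invariance of the volume product. Your write-up merely spells out the surjectivity/injectivity details that the paper leaves implicit.
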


\subsection{Using integration over not necessarily closed trajectories}
\label{section:integration}

In this section we provide another explanation of the reduction by one dimension, suitable for bodies $K\times K^\circ$, for a Hanner polytope $K$. Conjecturally, such bodies can be approximated by symplectic balls \cite[Question~5.2]{ostrover2014}, but the best we definitely know is that they have almost all characteristics on the boundary closed with the same action, see \cite{balitskiy2016}. Hence the argument from the previous section does not apply and we need another kind of argument. In fact the argument in this section equally applies to the case of the previous section. The only drawback is that this argument seems less plausible to be generalized to reductions by dimension more than one.

Let $S = K\times K^\circ$ be the product body, whose volume we assume known, and let 
\[
S' = (K/L)\times (K^\circ\cap L^\perp) = (K/L)\times (K/L)^\circ
\]
be the new product body, obtained by one-dimensional reduction along a line $L$. Let us, for a while, measure the volume of $S$ in terms of $\omega^n$ and the volume of $S'$ in terms of $\omega^{n-1}$, thus eliminating the inverse factorials in terms of Mahler's conjecture. We need to pass from $\int_S \omega^n$ to $\int_{S'} \omega^{n-1}$ somehow.

The symplectic reduction $S'$ can have different realizations in $S$, corresponding to different choices of the section of the quotient map $\mathbb R^{2n} \to \mathbb R^{2n}/L$ over $S'$. The choice done in the previous section represents $S'$ as half of the topological sphere $\partial S\cap \{H=0\}$, where $H$ is the linear function, whose zero set is $L^\omega$. The half $\Sigma^+$ of $\partial S\cap \{H=0\}$, according to reduction considerations, must be chosen so that the vector field $X_H$ points outside of $S$ in this half. This has a formulation in terms of the Poisson bracket of $H$ and the gauge function of $S$, which we call $F$ again. Another way to describe the choice of the half (in view of $X_F (H) = - X_H (F)$) is to say that the \emph{oriented characteristics} on the boundary of $S$ (that is, the trajectories of $X_F$ on the hypersurface $\{F = \const\}$) must intersect $\{H=0\}$ in the given direction, say, in the direction of increasing the linear function $H$.

Assuming the choice of $\Sigma^+$ symplectomorphic to $S'$, we choose a primitive $\lambda$ for $\omega$ ($d\lambda = \omega$) as
\[
\lambda = \frac{1}{2}\sum_{i=1}^n \left( p_idq_i - q_i dp_i\right)
\]
and look how to estimate the volume of $S$ from above knowing the volume of $\Sigma^+\subset \partial S$. Take a characteristic $\gamma : [a,b]\to \partial S$ staring at $\Sigma^+$ and having
\[
\int_\gamma \lambda \le A
\]
for a constant $A$, assume that $A$ is sufficiently large so that all considered characteristics almost cover the whole $\partial S$. This covering assumption means that the volume of $S$ has an upper bound
\begin{equation}
\label{equation:reduction-action-bound}
\int_S \omega^n = \int_{\partial S} \lambda \wedge \omega^{n-1} \le A \int_{\Sigma^+} \omega^{n-1}.
\end{equation}
The first equality here is the Stokes formula, the right hand side can be interpreted as follows. We consider the map $\psi : \Sigma^+\times [0,A] \to \partial S$, which takes $(p, t)$ as a starting point $p\in \Sigma^+$ and the parameter value, considers the trajectory $\gamma$ of $X_F$ in $\partial S$ starting at $p$ and takes the end point of this trajectory so that $\int_\gamma \lambda = t$. This endpoint is the value of $\psi(p, t)$ and our covering assumption is that $\psi$ is almost surjective. Note that the trajectories of $X_F$ lie in the kernel of $\omega|_{\partial S}$ and therefore we have
\[
\phi^* (\lambda\wedge \omega^{n-1}) = dt\wedge \omega^{n-1}
\] 
over $\Sigma^+\times [0,A]$. From this and the almost surjectivity of $\psi$ we have
\[
A \int_{\Sigma^+} \omega^{n-1} = \int_{\Sigma^+\times [0,A]} dt\wedge \omega^{n-1} \ge \int_{\partial S} \lambda \wedge \omega^{n-1},
\]
which explains the inequality in \eqref{equation:reduction-action-bound}.

The bound in \eqref{equation:reduction-action-bound} can be equally viewed as a lower bound on the volume of the symplectic reduction $S'$. Note that this volume argument is not the same as the Crofton-type argument in the previous section, this is actually a more general thing. In particular, there is no need to have a sympletic ball in $K\times K^\circ$ and therefore the whole argument becomes in fact elementary. The argument as given works clearly in the case when $\partial S$ is smooth, but it also works in the piece-wise smooth case (our particular case $S=K\times K^\circ$) when almost all (in terms of the measure) characteristics on $\partial S$ are well-defined (this is indeed so by the result of \cite{aao2012}).

More specifically, the bound \eqref{equation:reduction-action-bound} gives an estimate from below on the volume of $S'$, which can be written in standard terms (recalling that the standard volume forms are $\frac{\omega^n}{n!}$ and $\frac{\omega^{n-1}}{(n-1)!}$):
\[
\vol S' \ge \frac{n}{A} \vol S.
\]
The above assumptions on the number $A>0$ and the convex body $S$, when $S$ is a product $K\times K^\circ$, can be restated taking into account the interpretation of the action in terms of the length of a billiard trajectory from \cite{aao2012}: \emph{Of the segments of length $A$ of billiard trajectories in $K^\circ$ (with the length measured with $K^\circ$ as a unit ball) almost all intersect the hyperplane $\{H=0\}$ in the direction of increasing $H$.} 

Note that this assumption is satisfied when almost all billiard trajectories in $K^\circ$ are closed of length $A$, then almost all of them do not lie entirely in $\{H=0\}$ and therefore have to intersect this hyperplane in the positive direction at least once when running the length $A$. This property of the billiard trajectories has been established in the case when $K$ (and hence $K^\circ$) is a Hanner polytope in \cite{balitskiy2016} with the constant $A=4$. In view of the fact that Mahler's conjecture holds for the Hanner polytopes with equality and $\frac{n}{4}\cdot\frac{4^n}{n!} = \frac{4^{n-1}}{(n-1)!}$, we obtain the main result of this section:

\begin{theorem}
\label{theorem:mahler-hh}
Mahler's conjecture holds for hyperplane sections of Hanner polytopes and their projections to hyperplanes.
\end{theorem}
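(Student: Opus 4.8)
The plan is to apply the volume-reduction inequality \eqref{equation:reduction-action-bound} of Section~\ref{section:integration} with the optimal action constant $A=4$, feeding in the equality case of Mahler's conjecture for Hanner polytopes together with the closed-characteristics result of \cite{balitskiy2016}. First I would set up the reduction correctly. Since the class of Hanner polytopes is closed under polarity, I write the given Hanner polytope as $K^\circ$ for another Hanner polytope $K\subset\mathbb R^n$, so that a central hyperplane section $K^\circ\cap L^\perp$ is exactly the second factor of the one-dimensional reduction $S'=(K/L)\times(K^\circ\cap L^\perp)=(K/L)\times(K/L)^\circ$ of $S=K\times K^\circ$ along the line $L$. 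By the linear and polar invariance of Mahler's conjecture noted in the introduction, the estimate for this section simultaneously settles the projection $K/L=(K^\circ\cap L^\perp)^\circ$ and, after a linear change of coordinates, an arbitrary not necessarily central and not necessarily orthogonal section or projection.

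Second, I would verify the surjectivity hypothesis needed to invoke \eqref{equation:reduction-action-bound}, namely that of the length-$A$ segments of billiard trajectories in $K^\circ$ almost all cross $\{H=0\}$ in the direction of increasing $H$. For a Hanner polytope $K^\circ$, \cite{balitskiy2016} shows that almost every billiard trajectory is closed of length exactly $4$, so I take $A=4$. A closed trajectory not entirely contained in the hyperplane $\{H=0\}$ must, over one full period of length $4$, cross that hyperplane, and by closedness the numbers of positive and negative crossings agree, so there is at least one positive crossing; since only a measure-zero set of trajectories can lie inside the hyperplane, the covering hypothesis holds with $A=4$.

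Third, I would substitute into \eqref{equation:reduction-action-bound} in its volume form $\vol S'\ge\frac{n}{A}\vol S$. Because Mahler's conjecture holds with equality for Hanner polytopes, $\vol S=\vol(K\times K^\circ)=\vol K\cdot\vol K^\circ=\frac{4^n}{n!}$, so with $A=4$ we obtain
\[
\vol S'\ge\frac{n}{4}\cdot\frac{4^n}{n!}=\frac{4^{n-1}}{(n-1)!},
\]
which is precisely the assertion of Mahler's conjecture in dimension $n-1$ for the section $K^\circ\cap L^\perp$ and its polar $K/L$. This completes the estimate.

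The main obstacle I expect is the justification of \eqref{equation:reduction-action-bound} in the piecewise-smooth, non-ball setting, rather than the arithmetic of the final substitution. Unlike the $\ell_p$ case of Theorem~\ref{theorem:mahler-lph}, here there is no symplectic ball inside $K\times K^\circ$, so the Crofton argument is unavailable and everything rests on the action/Stokes computation combined with the closed-characteristics input. The delicate points are that $\partial S$ is only piecewise smooth, so the characteristic flow of $X_F$ and hence the map $\psi:\Sigma^+\times[0,A]\to\partial S$ are defined only almost everywhere; I would rely on \cite{aao2012} to guarantee that almost all characteristics on $\partial(K\times K^\circ)$ are well-defined, which suffices because the Stokes-type identity and the almost-surjectivity of $\psi$ are measure-theoretic statements, and then pass to any limits required.
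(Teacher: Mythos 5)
Your proposal is correct and follows essentially the same route as the paper's own proof: the one-dimensional reduction $S'=(K/L)\times(K/L)^\circ$, the action/Stokes bound \eqref{equation:reduction-action-bound} with $A=4$ supplied by the closed-characteristics result of \cite{balitskiy2016}, the equality case of Mahler's conjecture for Hanner polytopes, and the same appeal to \cite{aao2012} to handle the piecewise-smooth boundary. Even your verification of the covering hypothesis (a closed trajectory of length $4$ not lying in $\{H=0\}$ crosses it positively at least once) mirrors the paper's own wording, including its brevity about why such a closed trajectory must meet the hyperplane at all.
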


Another way to justify the usage of somewhat smooth arguments in the case, when $S=K\times K^\circ$ is apparently not smooth, we note the following. In our particular case, $S$ is a polytope, whose characteristics are well-defined on its facets and almost all of them pass from one facet to another in a well-defined way, being closed with action $A$. This is sufficient to have a conclusion with piece-wise smooth integration.

Of course, when $S$ is not a Lagrangian product of bodies in $\mathbb R^n$ and the dual $\mathbb R^n$, but is a smooth symplectic ball with all trajectories closed with action $A$, the argument also applies, giving another proof of Theorem~\ref{theorem:mahler-lph}.

\section{Appendix: Behavior of capacity under reduction}
\label{section:capacity}

Let us check that the proposed symplectic reduction approach is in accordance with the Viterbo conjecture approach to Mahler's conjecture. Consider a symplectic reduction of an arbitrary convex body $S\subset \mathbb R^{2N}$. Compare this with~\cite[Proposition 2.1]{vit2000}, where the displacement energy of a symplectic reduction is estimated from above in a certain way. We assume the reduction linear, this makes things simpler and preserves the convexity of $S$ under the reduction, and this is what we do in the proposed approach to Mahler's conjecture. Let us check how the Ekeland--Hofer--Zehnder capacity of $S$ behaves under a linear reduction to $S' = (S\cap L^\omega)/L$. We also assume that $S$ is smooth and strictly convex. These assumptions are not restrictive once we are aiming at Viterbo's conjecture or other inequalities that allow passing to the limit.

We will pass from convex bodies to norms $\|\cdot\|$, defined as
\[
\|v\| = \sup \{\omega(v,z)\ |\ z\in S\},
\] 
and consider the classical (see~\cite{clarke1979}) variational problem for closed loops $\gamma : \mathbb R/\mathbb Z \to \mathbb R^{2N}$:

\begin{equation}
\label{equation:clarke}
\int_\gamma \|\dot\gamma\| \to \min, \quad \int_\gamma \lambda = 1,
\end{equation}
where $\lambda$ is a primitive of $\omega$. The minimum in this variational problem is the Ekeland--Hofer--Zehnder capacity $c_{EHZ}(S)$ of the convex domain $S$ up to a constant, as shown in~\cite{clarke1979,ekeland1989}.

Now, assume we restrict the minimization problem to those loops $\gamma$ that are contained in a coisotropic linear subspace $L^\omega\subset \mathbb R^{2n}$. For such $\gamma$, the integral $\int_\gamma \lambda$ only depends on the projection of $\gamma$ onto $L^\omega/L$. Assuming that such a projection $\beta$ is given, we can try to restore $\gamma$ by choosing the velocity $\dot\gamma$ as the velocity of smallest norm $\|\dot\gamma\|$ that is projected to the given velocity $\dot\beta$. This just corresponds to restricting the norm $\|\cdot\|$ to $L^\omega$ and then taking the norm on the quotient space by the standard construction.

Of course, in this process of selecting $\dot\gamma$ for given $\dot\beta$ it may happen that the so constructed curve $\gamma$ will not close up, the start and the end points may not match. In general, we have no idea how to handle this issue, but the important particular case of a centrally symmetric $\|\cdot\|$ (needed in Mahler's conjecture) has a remedy \cite{akopyan2018capacity}:

\begin{lemma}[Akopyan, Karasev, 2018]
\label{lemma:symmetry}
In the problem \eqref{equation:clarke}, for centrally symmetric $\|\cdot\|$, one of the minima is attained at a curve $\gamma$ centrally symmetric with respect to the origin. For smooth and strictly convex $S$ we can say more: All minima of \eqref{equation:clarke} are centrally symmetric with respect to some center.
\end{lemma}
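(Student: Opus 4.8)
The plan is to exploit the $\mathbb Z/2$-symmetry of the problem directly on the level of the variational functional. Since $\|\cdot\|$ is centrally symmetric, the map $\gamma\mapsto -\gamma$ sends admissible loops to admissible loops: it preserves $\int_\gamma\|\dot\gamma\|$ because $\|-\dot\gamma\|=\|\dot\gamma\|$, and it preserves the constraint $\int_\gamma\lambda=1$ because $\lambda$ is linear in the coordinates and hence odd, so that the pullback of $\lambda$ under $z\mapsto -z$ is again $\lambda$. Thus the involution $\iota:\gamma\mapsto-\gamma$ acts on the space of admissible loops and commutes with the objective functional, so it permutes the set of minimizers. To produce a symmetric minimizer, I would take any minimizer $\gamma_0$ and form the ``average'' loop that traverses $\gamma_0$ on the first half of the time interval and $-\gamma_0$ on the second half, suitably reparametrized; one checks that this concatenation is again a closed loop with the same action and the same value of $\int\lambda$, and that it is invariant (up to time shift by a half-period) under $\iota$. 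This gives the first, weaker assertion.

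For the stronger statement — that for smooth and strictly convex $S$ \emph{every} minimizer is symmetric about some center — I would argue by a uniqueness-of-closed-characteristic principle together with strict convexity. The Euler--Lagrange equation of \eqref{equation:clarke} says that a minimizer is, up to reparametrization, a closed characteristic of $\partial S$ of minimal action, i.e. a closed trajectory of the Reeb/Hamiltonian flow on the level set of the gauge function, carrying the EHZ capacity. Under strict convexity the characteristic foliation on $\partial S$ is smooth and the minimal-action characteristic is a nondegenerate geometric object. Now let $\gamma$ be any minimizer; then $\iota(\gamma)=-\gamma$ is also a minimal closed characteristic, hence a translate/reparametrization of a characteristic on the same hypersurface. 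Comparing $\gamma$ with $-\gamma$ and using that two minimal closed characteristics of a strictly convex body either coincide (as sets) or are disjoint, I would deduce that the image of $\gamma$ is invariant under the point reflection $z\mapsto 2c-z$ for some center $c$, where $c$ is determined as the midpoint that makes the two curves coincide. The center need not be the origin precisely because the minimizer itself is only constrained up to translation along characteristics.

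The main obstacle is the step that upgrades ``a symmetric minimizer exists'' to ``all minimizers are symmetric.'' The first part is soft and uses only the $\mathbb Z/2$-equivariance of the functional, so it is essentially formal. The second part genuinely needs the geometry of strictly convex bodies: one must rule out the possibility of a minimizer whose image is \emph{not} centrally symmetric about any point, and this requires knowing that the minimal closed characteristic is in an appropriate sense unique, so that $\gamma$ and $-\gamma$ are forced to have the same image. I would expect the cleanest route to invoke the precise results of \cite{akopyan2018capacity} on the structure of EHZ-minimizers for smooth strictly convex bodies, rather than reproving uniqueness from scratch; the delicate points are handling the reparametrization freedom and the choice of center $c$, and verifying that strict convexity indeed forbids two distinct minimal characteristics from sharing no common center. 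Since the statement is attributed to the authors' separate paper, the role here is to indicate the mechanism (equivariance plus rigidity of minimal characteristics) rather than to carry out the full argument.
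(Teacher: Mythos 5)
First, a point of reference: the paper does not prove this lemma at all --- it is quoted verbatim from the external paper \cite{akopyan2018capacity} (Akopyan--Karasev, 2018), so there is no internal proof to compare against. Your proposal therefore has to stand on its own, and it does not: both halves have genuine gaps, and the first half's key claim is false as stated.

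For the first assertion, your concatenation construction fails quantitatively. Two preliminary issues: the endpoints of $\gamma_0$ and $-\gamma_0$ do not match (the loop $\gamma_0$ closes at $\gamma_0(0)$, while $-\gamma_0$ starts at $-\gamma_0(0)$), which is only repairable by first translating $\gamma_0$ to pass through the origin --- legitimate, since both $\int\|\dot\gamma\|$ and $\int_\gamma\lambda$ are translation invariant on closed loops, but unaddressed. The fatal issue is your claim that the concatenation has ``the same action and the same value of $\int\lambda$'': it has \emph{twice} both. Traversing $\gamma_0$ and then $-\gamma_0$ gives $\int\|\dot\Gamma\| = 2\int\|\dot\gamma_0\|$ (the norm is even) and $\int_\Gamma\lambda = 2$ (the antipodal map preserves $\lambda$, and the integral is additive over the two closed pieces), and reparametrization changes neither quantity. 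Rescaling $\Gamma$ by $1/\sqrt2$ to restore the constraint $\int\lambda=1$ leaves a symmetric admissible loop of length $\sqrt2\,\int\|\dot\gamma_0\|$, which is \emph{not} a minimizer. This factor of $2$ (in the scale-invariant ratio $\bigl(\int\|\dot\gamma\|\bigr)^2/\int_\gamma\lambda$) is exactly why the lemma is a theorem and not an observation: every naive $\mathbb Z/2$-averaging scheme hits it. For instance, the more natural symmetrization $\sigma(t)=\tfrac12\left(\gamma(t)-\gamma(t+\tfrac12)\right)$ has $\int\|\dot\sigma\|\le\int\|\dot\gamma\|$ by convexity, but its action obeys only the parallelogram-type identity $\int_\sigma\lambda = 1-\int_m\lambda$ with $m(t)=\tfrac12\left(\gamma(t)+\gamma(t+\tfrac12)\right)$, and minimality of $\gamma$ only forces $\int_m\lambda\le\tfrac12$, leaving the same factor-of-two loss. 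Closing this gap requires a genuinely different idea, which is the content of \cite{akopyan2018capacity}.

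For the second assertion, your dichotomy is correct as far as it goes: for smooth strictly convex $S$ the minimizers are, up to translation (this translation freedom, a constant of integration in the Euler--Lagrange equation, is the source of ``some center''), closed characteristics of minimal action on a fixed centrally symmetric hypersurface; the Hamiltonian vector field of an even Hamiltonian is odd, so $-\gamma$ is a characteristic of the same field, and if $\gamma$ and $-\gamma$ intersect, uniqueness of the characteristic through a point gives $-\gamma(t)=\gamma(t+s_0)$, whence $2s_0\equiv 0$ and $\gamma(t+T/2)=-\gamma(t)$. But nothing in your argument excludes the other branch of the dichotomy, where $\gamma$ and $-\gamma$ are disjoint; you cannot appeal to uniqueness of \emph{the} minimal closed characteristic, since minimal closed characteristics are in general highly non-unique (on the round ball every characteristic is one). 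Ruling out the disjoint case is precisely the hard part, and your own text concedes it is unverified. So the proposal correctly identifies the mechanism ($\mathbb Z/2$-equivariance plus rigidity of characteristics) but proves neither statement; in particular the step you call ``essentially formal'' is the one that is actually false.
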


We summarize our findings in:

\begin{theorem}
\label{theorem:reduction-capacity}
For a centrally symmetric convex $S\subset\mathbb R^{2n}$, the Ekeland--Hofer--Zehnder capacity cannot decrease in a linear reduction $S' = (S\cap L^\omega)/L$, that is $c_{EHZ}(S') \ge c_{EHZ}(S)$.
\end{theorem}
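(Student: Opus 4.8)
The plan is to produce, from a minimizer of the Clarke problem \eqref{equation:clarke} for the reduced body $S'$, an admissible loop for the same problem on $S$ of equal length; since $c_{EHZ}(S)$ equals, up to the universal constant, the minimum over all admissible loops (and the same constant occurs for $S'$, so it cancels), this gives $c_{EHZ}(S)\le c_{EHZ}(S')$ at once. Equivalently, one restricts the $S$-problem to loops lying in the coisotropic subspace $L^\omega$, which can only raise the minimum, and then checks that this restricted problem is precisely the $S'$-problem.

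First I would record the two compatibilities that turn the restricted problem into the $S'$-problem. For a loop $\gamma\subset L^\omega$ the action $\int_\gamma\lambda$ depends only on the projection $\beta=\pi\circ\gamma$ to $L^\omega/L$ and equals $\int_\beta\bar\lambda$ for the reduced primitive $\bar\lambda$, as already noted above. For the length, I would lift a prescribed velocity $\dot\beta$ by the vector $\dot\gamma\in L^\omega$ of smallest norm projecting to it, so that $\|\dot\gamma\|$ becomes the quotient norm of $\dot\beta$; this quotient norm is exactly the norm $\|w\|'=\sup\{\bar\omega(w,z'):z'\in S'\}$ attached to $S'$. The latter identification is a one-line minimax: writing a lift as $v+u$ with $v\in L^\omega$ fixed and $u\in L$,
\[
\inf_{u\in L}\ \sup_{z\in S}\omega(v+u,z)=\sup_{z\in S}\ \inf_{u\in L}\omega(v+u,z)=\sup_{z\in S\cap L^\omega}\omega(v,z)=\|\pi v\|',
\]
where the interchange is Sion's minimax theorem ($S$ is compact convex and $L$ is a subspace), and the inner infimum is $0$ exactly when $z\in L^\omega$ and $-\infty$ otherwise.

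The main obstacle is that a pointwise minimal-norm lift of $\dot\beta$ need not integrate to a \emph{closed} loop, and this is precisely where central symmetry enters. By Lemma~\ref{lemma:symmetry} I may take a minimizer $\beta$ of the $S'$-problem with $\beta(t+\tfrac12)=-\beta(t)$. Since $S$ is smooth and strictly convex the norm $\|\cdot\|$ is strictly convex, so the minimal-norm lift $\dot\gamma(t)$ is unique and therefore inherits $\dot\gamma(t+\tfrac12)=-\dot\gamma(t)$ (if one prefers not to use uniqueness, a symmetric selection of minimal lifts works equally well); hence $\int_0^1\dot\gamma=\int_0^{1/2}\dot\gamma+\int_0^{1/2}\dot\gamma(s+\tfrac12)\,ds=0$ and $\gamma$ closes up automatically. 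The resulting $\gamma\subset L^\omega$ is then admissible for the $S$-problem, with $\int_\gamma\lambda=\int_\beta\bar\lambda=1$ and length $\int_\gamma\|\dot\gamma\|=\int_\beta\|\dot\beta\|'=c_{EHZ}(S')$, which gives $c_{EHZ}(S)\le c_{EHZ}(S')$. The only points requiring care beyond this are the existence and regularity of the $S'$-minimizer $\beta$ needed to run the lift, which is why smoothness and strict convexity of $S$ (and hence of $S'$) were assumed.
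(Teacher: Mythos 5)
Your proposal is correct and follows essentially the same route as the paper's proof: restrict the Clarke problem \eqref{equation:clarke} to loops in $L^\omega$, identify the restricted problem with the $S'$-problem via the quotient norm, and invoke Lemma~\ref{lemma:symmetry} to take a centrally symmetric minimizer $\beta$ whose minimal-norm lift closes up by anti-periodicity of the lifted halves. The only difference is that you make explicit two points the paper treats as standard — the Sion minimax identification $\inf_{u\in L}\sup_{z\in S}\omega(v+u,z)=\|\pi v\|'$ and the uniqueness (or symmetric selection) of the minimal-norm lift — which is a welcome elaboration rather than a different argument.
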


\begin{proof}
According to Lemma~\ref{lemma:symmetry}, we may assume that the solution of \eqref{equation:clarke} for $S'$, $\beta$, is centrally symmetric. In this case the lift of $\beta$ from $L^\omega/L$ to $L^\omega$ gets closed because its lifted halves may be chosen centrally symmetric to each other. So the capacity will not decrease since going to the reduction corresponds to restricting the domain in the minimization problem.
\end{proof}

\begin{remark}
This theorem gives yet another proof of the main result of \cite{aaok2013}, $c_{EHZ}(K\times K^\circ) \ge 4$ for centrally symmetric $K$, since any such $K\times K^\circ$ can be approximated by linear reductions of $C\times C^\circ$ (cube by cross-polytope), which in turn contains convex symplectic balls of capacity arbitrarily close to $4$ by Proposition~\ref{proposition:convex-lp}.
\end{remark}

\bibliography{../Bib/karasev}
\bibliographystyle{abbrv}
\end{document}